\newtheorem{theorem}{Theorem}[section]
\newtheorem{lemma}[theorem]{Lemma}
\newtheorem{proposition}[theorem]{Proposition}
\newtheorem{hypothesis}{Hypothesis}
\theoremstyle{definition}
\newtheorem{definition}[theorem]{Definition}
\theoremstyle{remark}
\newtheorem{remark}[theorem]{Remark}
\numberwithin{equation}{section}
\newcommand{\be}{\begin{equation}}
\newcommand{\ee}{\end{equation}}
\definecolor{orange}{rgb}{0.995, 0.75, 0.35}
\definecolor{purple}{rgb}{0.7, 0.2, 0.5}
\definecolor{royalblue}{rgb}{0.2, 0.7, 0.8}
\definecolor{darkgreen}{rgb}{0.2,0.725,0.25} 
\newcommand{\norm}[1]{\|#1\|}
\def\de{\delta}
\def\eps{\epsilon}
\def\ga{\gamma}
\def\lam{\lambda}
\def\vphi{\varphi}
\def\De{\Delta}
\def\Ga{\Gamma}
\def\inv{^{-1}}
\def\iy{\infty}
\def\la{\langle}
\def\ra{\rangle}
\newcommand{\R}{\mathbb{R}}
\begin{document}
\title[Orbital Stability  for fractional Hartree equation]{Orbital Stability of Standing Waves  for  Fractional 
Hartree  Equation  with Unbounded Potentials}
% \title[short text for running head]{full title}

%    Only \author and \address are required

\author{Jian Zhang} 
\address[Jian Zhang]{School of Mathematical Science\\
 University of Electronic Science and Technology of China\\
  Chengdu, Sichuan, 611731, China}  
\email{zhangjiancdv@sina.com} 
\author{Shijun Zheng}
\address[Shijun Zheng]{Department of Mathematical Sciences, Georgia Southern University\\ 
Statesboro, Georgia 30460-8093, USA}  
%\curraddr{}
\email{szheng@GeorgiaSouthern.edu}
\author{Shihui Zhu}
\address[Shihui Zhu]{Department of Mathematics, Sichuan Normal University\\
 Chengdu, Sichuan, 610066, China.} 
\email{shihuizhumath@163.com}

\thanks{}

\keywords{fractional  Hartree  equation, standing wave, orbital stability}
%    The 2010 edition of the Mathematics Subject Classification is the current definitive version.
\subjclass[2010]{Primary 35Q55, 35B35, Secondary 35C08}%35A15, 35J20
%3555 NLS-like equations;
%35J20 Variational methods for second-order elliptic equations

\date{}%November 15, 2017}

\begin{abstract} 
  We prove the existence of the set of ground states in a suitable energy space $\Sigma^s=\{u: \int_{\R^N} \bar{u}(-\De+m^2)^s  u+V |u|^2<\iy\}$,
$s\in (0,\frac{N}{2})$
for the mass-subcritical nonlinear fractional Hartree equation with unbounded potentials. 
As a consequence we  obtain, %when  $0<\gamma<2s$, 
 as a priori result, the orbital stability of the set of 
  standing waves.  The main ingredient is the observation that $\Sigma^s$ is compactly embedded in $L^2$. This enables us to apply the concentration compactness argument in 
  the works of Cazenave-Lions and Zhang, %\cite{CazLion82} and \cite{zhang2000a}, 
  namely, relative compactness for any minimizing sequence in the energy space.
\end{abstract}

\maketitle

\section{Introduction} 
 Consider the nonlinear fractional  Hartree equation with an unbounded potential  in the following form: 
For $0<s<N/2$ and $(t,x)\in \mathbb{R}^{1+N}$ 
\begin{align}
&iu_t=(-\De+m^2)^su+V(x)u-(\frac{1}{|x|^{\gamma}}*|u|^{2})u\label{1.1}\\ 
&u(0,x)=u_0\in \Sigma^s, \label{1.2}
\end{align}
where  $u=u(t,x)$: $\mathbb{R}\times \mathbb{R}^N \to \mathbb{C}$ is  a complex valued function and the convolution $*$ is defined by 
$W*|u|^2:=\left(\frac{1}{|\cdot|^{\gamma}}*|u|^2 \right) (x)=\int \frac{|u(y)|^2}{|x-y|^{\gamma}}dy$ with $0<\gamma<\min\{4s,N\}$. 
The operator $(-\De+m^2)^{s}$ is defined by
\[(-\De+m^2)^{s}u=\mathcal{F}^{-1}[ (|\xi|^{2}+m^2)^s\mathcal{F}[u](\xi)], \]
where $m\geq 0$, $\mathcal{F}$ and
$\mathcal{F}^{-1}$ are the Fourier transform and the inverse
 in $\mathbb{R}^N$, respectively.  %The real parameters satisfy $0<s<1$, 
  Here  the potential function $V\in C^\iy(\R^N)$ is bounded from below $V(x)\geq -c_0$
   for some $c_0>0$ and satisfies
\begin{equation}\label{Potential} 
 V(x)\rightarrow \infty\ \ \ {\rm as}\ \ \ |x|\rightarrow \infty.
\end{equation} 
 Such class of $V$ includes  unbounded potentials arising in physics, for instance,  the harmonic potential $V(x)=|x|^2$, 
and more generally,  %Stark potential $V(x)= E\cdot x$, or  
 polynomial functions  that are bounded from below. %cf. \cite{deSAr14}
  Then  $H_{s,V}:=(-\Delta+m^2)^s+V $ is essentially self-adjoint in the Hilbert space 
  $\Sigma^s:= \{v\in L^2: (-\De+m^2)^{s/2}v\in L^2 \; and \;|V|^{1/2} v\in L^2\}$, 
  as is given in \mbox{Section \ref{s:prelim}.}

The Hartree equation is non-integrable analog of the NLS system, which 
arises naturally in large quantum systems that describe the wave motion for the bosonic or fermionic particles. 
When $s=1$, it can be derived from the mean-field limit of $N$-body GPE hierarchy, where $W(x)=|x|^{-\ga}$ 
represents the long-range two-body  interaction potential. 
A special feature of the Hartree equation lies in the convolution kernel $W$ %=|x|^{-\ga}$ 
that preserves the fine structure of micro two-body interactions of particles.
This is in contrast to the NLS, which can be viewed as the limiting case $W\to \de$ and where the two-body interactions are modeled from the scattering length. 
When $s=\frac12$, (\ref{1.1})  arises as an effective description of pseudo-relativistic
boson stars  in the mean field limit,  where $u(t, x)$ is a complex-valued wave field. 
The study of  such a perturbed system in the presence of external potential $V$ is both physically and mathematically very important. 

When $V$ is zero or  bounded,  the dynamical properties of solutions for   
the fractional Hartree equation \eqref{1.1}  have been considered in e.g., \cite{LieYau87,FrohlichLenzmann2007, HainzlLenzmannLewinSchlein2010, BaoDong2011,zhangzhu2015,FengZha18,guozhu18,zhu2016}.   %When the general fractional index $0<s<1$, this model  have been deduced by Laskin from expanding the Feynman path integral from the Brownian-like to the L\'evy-like  quantum mechanical paths (see \cite{Laskin2000,Laskin2002})   
 In \cite{zhangzhu2015,FengZha18}, the authors studied the orbital stability for standing wave solutions, that is, 
 ground states  (the set of minimizers)  
   for  (\ref{1.1})  with zero potential by  profile decomposition method. Furthermore, \cite{zhangzhu2015} showed the strong instability in the mass-critical case. 
The  analogous results on orbital stability for nonlinear fractional Schr\"{o}dinger equation (FNLS) were obtained in 
 \cite{CHHO2014,Guohuang2012,zhu2017}. 
 The paper \cite{CHHO2014} studied the  FNLS
with power nonlinearity and a decaying potential, % by  concentration compactness principle
  while \cite{Guohuang2012,zhu2017} studied %the existence and orbital stability of standing waves for
   FNLS  with power nonlinearity in the form $|u|^{p-1}u+|u|^{q-1}u$.
  
In this paper, %when $0<s<N/2$ 
we are concerned with the orbital stability of standing wave solutions for (\ref{1.1}) %with  $V$ satisfying (\ref{Potential}) 
in  the  mass-subcritical regime  $\ga<2s$. 
Note that when $V=0$,   $\ga=2s$ corresponds to the 
   mass-critical case and $\ga=4s$ %<N 
corresponds to the energy-critical case   by scaling invariance argument.  
The presence of an unbounded $V$ brings in technical difficulties %the nonlinearity contains the nonlocal integral operator, the studying of the standing waves is of particular interest and difficulty
and  the methods in \cite{CHHO2014,Guohuang2012,zhangzhu2015} do not  directly apply to \eqref{1.1}.  % existence and stability 
Motivated by  the treatment for the classical NLS in \cite{zhang2000a,Zhang00}, we prove  a new compactness lemma in Section \ref{s:proof-main}, adapted to the energy  space  for  (\ref{1.1}), which leads to the solution to the variational problem \ref{VP} in Proposition \ref{p:fnlsV}.
Thus follows the main result Theorem \ref{th:orb-stab_V}.  The proof in Section \ref{s:orbital-hartreeV} 
can be viewed as an adaption to the potential case,
also see  \cite{CazLion82,Cazenave2003} for the original treatment of NLS type equations via variational method.  

 \section{Preliminaries}\label{s:prelim}
  We will use the notations $L^q:=L^q(\mathbb{R}^N)$, $\|\cdot\|_q:=\|\cdot\|_{L^q(\mathbb{R}^N)}$,
$H^s:=H^s(\mathbb{R}^N)$ and $\dot{H}^s:=\dot{H}^s(\mathbb{R}^N)$, the latter two denoting the usual Sobolev space and its homogeneous version. %and $\int \cdot dx:=\int_{\mathbb{R}^N}\cdot dx$
The various positive constants will be simply denoted by $C$.   From now on throughout the paper, without loss of generality  we may assume  there is some positive constant $c_1$ such that   for all $x$,
\begin{align}\label{mod-potential}
V(x)\ge c_1>0\quad and\quad\; V(x)\to \iy\quad as\; |x|\to \iy.
\end{align}
If otherwise,  the lower bound of $V$ is $-c_0<0$, then one can always 
 apply the substitution $u\to e^{itC}u$ for any  $C\ge c_0+c_1$ to convert  equation \eqref{1.1} into one with $V$ bounded from below by a positive constant; 
see also Remark \ref{re:genV}. %Then $w$ will solves the same pde as in \eqref{1.1}, \eqref{1.2}  with $V$ replaced with $V(x)+C$ and $w(0)=u_0$

 For   $V$ satisfying (\ref{mod-potential}) define the energy space  $\Sigma^s$  as
 \begin{equation}\label{e:Sigma^s_V}
 \Sigma^s:=\{ v\in L^2\big\vert\  \int \overline{v}(-\De+m^2)^s  v+V |v|^2<\infty\}.
 \end{equation}
 Then  $\Sigma^s$ is a Hilbert space equipped with the norm
  \begin{equation}\label{v-Hs-norm}
\|v\|_{\Sigma^s}:=\left(\int \overline{v}(-\De+m^2)^s  v+V |v|^2\right)^{\frac12},
\end{equation}%\footnote{the $\norm{v}^2_2$ is removed  because if $V$ is bounded below by a positive constant, then the present (new) norm is equivalent to the older norm in the ms}
which means that $H_{s,V}=(-\De+m^2)^s +V$ is
 essentially self-adjoint   in the quadratic form on $\Sigma^s\times \Sigma^s$ given by 
\begin{align*}
&q(u,v):=%\la H_{s,V} u, v\ra_{q}\\ %\le C\norm{\phi}_{\Sigma^s}
 \int\overline{(-\De+m^2)^{s/2} v} \,(-\De+m^2)^{s/2}u+\int \overline{v}Vu.
\end{align*}
Note that $\|v\|_{\Sigma^s}<\iy$  implies  $\norm{v}_2$ is finite. %in view of the Hardy inequality \eqref{Hs-hardy} and \eqref{mod-potential}.
 When $s\in (0,1)$ and $m=0$, one can also express
 \begin{align*}
(-\De)^s u=c_{s,N}\, p.v.\, \int_{\R^N} \frac{u(x)-u(y)}{|x-y|^{N+2s}}dy,
\end{align*}
where $c_{s,N}=\frac{2^{2s}}{\pi^{N/2}} \frac{\Ga(s+\frac{N}{2})}{ |\Ga(-s)|}$. 
 
   Define the energy functional $E: \Sigma^s\to \R$ as
 \begin{equation}\label{Ev:DesV_hartree}
  E(v)= \frac12\int \overline{v}(-\De+m^2)^s  v+\frac 12 \int V|v|^2
 -\frac{1}{4} \int (\frac{1}{|x|^{\gamma}}*|v|^2) |v|^2.
 \end{equation}
From the Hardy inequality (\ref{Hs-hardy}),   the functional  $E(v)$ is
 well-defined in $\Sigma^s$.  Note that  if $V=0$ and $m=0$, $\ga=4s<N$ 
 corresponds to  the energy-critical case due to that the scaling 
$u\mapsto \frac{1}{\lam^{(N-2s)/2}}u(\frac{t}{\lam^{2s}},\frac{x}{\lam} )$ leaves invariant the energy and the solution to \eqref{1.1}. %c |x|^{-2s}$.
 When $\ga=2s<N$, it corresponds to 
 to  the mass-critical case due to that the scaling 
$u\mapsto \frac{1}{\lam^{N/2}}u(\frac{t}{\lam^{2s}},\frac{x}{\lam} )$ leaves invariant the mass $M(u):=\int |u|^2$ and \eqref{1.1}. %if V=0
 
 In this paper, we assume that the Cauchy problem (\ref{1.1})-(\ref{1.2}) is %locally 
 well-posed in
 $\Sigma^s$, namely, the following global in time existence, uniqueness %blowup alternative 
 and conservation laws hold in the mass-subcritical regime $\ga<2s$. 
\begin{hypothesis}\label{h:lwp-blupaltern-conserv} 
Let $N\geq 1$ and $\frac{\gamma}{2}<s<N/2$. If the
initial data $u_{0}\in \Sigma^s$, then
 there exists a unique solution $u(t,x)$
of the Cauchy problem (\ref{1.1})-(\ref{1.2}) on $\R$ %%the maximal time interval $I=[0,T^*)$ 
such that $u\in C(\R;\Sigma^{s})\bigcap C^1(\R;H^{-s})$. %If $T^*<\iy$, then $\norm{u}_{\Sigma^s}\to \iy$. 
Moreover, for all $t\in \R$, $u(t,x)$ satisfies the following conservation laws: 
\begin{itemize}
  \item [(i)]   \textup{(mass)}  \ \ \[  M(u(t))=M(u_0) .\]
  \item [(ii)] \textup{(energy)}\ \ \  \[E(u(t))=E(u_{0}).\]
\end{itemize}
\end{hypothesis}

\begin{remark} For local well-posedness in the above conjectured proposition, so far we only know about numerical result on cubic FNLS with a quadratic  potential by Zhang and Kirkpatrick \cite{KirZh16}, where 
  is suggested the long-time existence along with its dynamics for some special initial data. %\edz{mar.21st, some assumptions}
The theoretical proof has remained an {\em open question} concerning FNLS and Hartree equations with harmonic potentials. 
The main difficulty is the lack of proper dispersive or Strichartz estimates because the fractional Laplacian for $0<s<1$ does not hold a control  over the harmonic potential,
which  is shown in the deformed trajectories for the associated hamiltonian, see \cite{KirZh16} and \cite{SeSq14}.
This is in sharp contrast to the classical NLS $(s=1)$ with a harmonic potential, where 
the existence and stability problem has been studied quite extensively \cite{Fukuizumi2001,FukuizumiOhta2003,RoseWeinstein1988,zhang2000a,Zhang00,HuangZhangLi13,Feng16}. 

Heuristically the Laplacian $-\De$ and $V=|x|^2$ have balanced strength or effect so the $L^1\to L^\iy$ time decay $t^{-N/2}$ holds locally for $e^{it(\De-V)}$. 
However, for $0<s<1$,
in the phase space the bound energy for fractional laplacian $(-\De+m^2)^s$ is less than  $|x|^2$, 
or, $H_{s,0}$ relative to $V$ is like the laplacian vs. anharmonic potential of higher order. 
It falls within the quantum situation  %the fact that %for potentials of order higher than two'' %\edz{V serving as waveguide} 
%reflects that 
``a particle at a higher altitude falls down to the bottom of the potential in 
a shorter time than one at a lower altitude'', which  obstructs Fujiwara's  theorem,   
cf. \cite{Oh89}. %This may lead to the failure of dispersive or Strichartz estimate or, with a loss of derivative for these estimates
  \end{remark}

\section{Proof of main result}\label{s:proof-main}
For estimating the Hartree nonlinearity we will need Hardy's inequality, see e.g, \cite{Tao2006}: If $s\in (0,N/2)$
\begin{align}
&\sup_{x}\int \frac{|u(y)|^2}{|x-y|^{2s}}dy\le c(s,N)\Vert u\Vert^2_{\dot{H}^{s}} \,.\label{Hs-hardy}
\end{align}

\begin{lemma}\label{l:ga-hardy-hartree} Let $v\in \dot{H}^s$ and $0<s<N/2$. If $0<\gamma< 2s$, then  there exists a positive constant $C>0$ such that  
\be\label{G-N}
\int(\frac{1}{|x|^\gamma}*|v|^2)|v|^2dx\leq
C \|v\|_{\dot{H}^s}^{\frac{\gamma}{s}}\
\|v\|_2^{\frac{4s-\gamma}{s}}.
\ee
\end{lemma}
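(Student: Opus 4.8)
The plan is to reduce the quartic Hartree term to the product of a mass factor and a homogeneous Sobolev factor, using precisely the Hardy inequality \eqref{Hs-hardy}. First I would apply Hölder's inequality to peel off one copy of $|v|^2$, writing
\[
\int\Big(\frac{1}{|x|^\gamma}*|v|^2\Big)|v|^2\,dx\le \Big\|\frac{1}{|x|^\gamma}*|v|^2\Big\|_{\infty}\,\|v\|_2^2,
\]
so that the whole problem reduces to bounding the sup-norm of the induced potential $\frac{1}{|x|^\gamma}*|v|^2$, which is exactly $\sup_x\int \frac{|v(y)|^2}{|x-y|^\gamma}\,dy$.

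The second step is to invoke \eqref{Hs-hardy} with $s$ replaced by $\gamma/2$. This substitution is legitimate because the hypothesis $0<\gamma<2s<N$ forces $\gamma/2\in(0,N/2)$, which is exactly the range in which \eqref{Hs-hardy} is valid; it gives
\[
\sup_x\int\frac{|v(y)|^2}{|x-y|^\gamma}\,dy\le c(\tfrac{\gamma}{2},N)\,\|v\|_{\dot H^{\gamma/2}}^2.
\]
The third step is to interpolate the intermediate homogeneous Sobolev norm between $\dot H^s$ and $L^2$. Since $0<\gamma/2\le s$ (again using $\gamma<2s$), Plancherel's theorem together with Hölder's inequality on the Fourier side — splitting $|\xi|^\gamma=(|\xi|^{2s})^{\gamma/2s}$ and using the conjugate exponents $2s/\gamma$ and $2s/(2s-\gamma)$ — yields
\[
\|v\|_{\dot H^{\gamma/2}}\le \|v\|_{\dot H^s}^{\gamma/2s}\,\|v\|_2^{1-\gamma/2s}.
\]
Squaring this and chaining the three estimates produces
\[
\int\Big(\frac{1}{|x|^\gamma}*|v|^2\Big)|v|^2\,dx\le C\,\|v\|_{\dot H^s}^{\gamma/s}\,\|v\|_2^{\,4-\gamma/s}=C\,\|v\|_{\dot H^s}^{\gamma/s}\,\|v\|_2^{(4s-\gamma)/s},
\]
which is exactly \eqref{G-N}.

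The only genuine subtlety — and the single point where the subcritical hypothesis $\gamma<2s$ is indispensable — is the simultaneous requirement that $\gamma/2$ be admissible both for Hardy's inequality (needing $\gamma/2<N/2$, automatic from $\gamma<N$) and for the interpolation (needing $\gamma/2\le s$, which is precisely $\gamma\le 2s$). Everything else is a routine chain of Hölder-type estimates with no hidden difficulty, and the resulting exponents are forced by scaling: testing \eqref{G-N} against the dilation $v(\cdot)\mapsto v(\lambda\,\cdot)$ shows both sides transform as $\lambda^{\gamma-2N}$, so the homogeneities $\gamma/s$ and $(4s-\gamma)/s$ are the only ones possible.

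As an alternative route one could bypass \eqref{Hs-hardy} and argue by the Hardy--Littlewood--Sobolev inequality, writing the left-hand side as $\iint |v(x)|^2|v(y)|^2|x-y|^{-\gamma}\,dx\,dy$ and bounding it by $C\|v\|_r^4$ with $r=4N/(2N-\gamma)$, followed by the fractional Gagliardo--Nirenberg inequality $\|v\|_r\le C\|v\|_{\dot H^s}^{\gamma/4s}\|v\|_2^{1-\gamma/4s}$. There the admissibility condition $2<r<2N/(N-2s)$ is equivalent to $0<\gamma<4s$, a wider range that comfortably contains the regime $\gamma<2s$ of the lemma; I would nonetheless present the Hardy-based argument as primary, since it uses precisely the tool \eqref{Hs-hardy} already recorded in the paper.
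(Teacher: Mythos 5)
Your proof is correct, and it differs from the paper's in a small but genuine way in how the key pointwise bound \eqref{e:hardy-ga-s2} on the potential $\frac{1}{|x|^\gamma}*|v|^2$ is obtained. Both arguments share the first reduction \eqref{xga-v4}, bounding the quartic term by $\bigl\|\frac{1}{|x|^\gamma}*|v|^2\bigr\|_{\infty}\|v\|_2^2$. From there the paper splits $|v(x)|^2=|v(x)|^{\gamma/s}\,|v(x)|^{(2s-\gamma)/s}$ and applies H\"older \emph{in physical space} against the kernel, so that \eqref{Hs-hardy} is then invoked verbatim at the exponent $s$ appearing in the hypothesis; you instead apply \eqref{Hs-hardy} at the shifted exponent $\gamma/2$ (legitimate, since $0<\gamma/2<s<N/2$) and only afterwards interpolate, via Plancherel and H\"older \emph{on the Fourier side}, $\|v\|_{\dot{H}^{\gamma/2}}\le\|v\|_{\dot{H}^s}^{\gamma/2s}\|v\|_2^{1-\gamma/2s}$. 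The two routes are dual reorderings of the same pair of operations (Hardy plus an interpolation with conjugate exponents $2s/\gamma$ and $2s/(2s-\gamma)$), and they land on identical exponents; your version has the mild advantage of isolating the intermediate norm $\|v\|_{\dot{H}^{\gamma/2}}$ explicitly, at the cost of having to verify the admissibility of the shifted exponent, which you do. Your scaling check confirming that both sides of \eqref{G-N} transform as $\lambda^{\gamma-2N}$ is a worthwhile sanity test the paper omits. Finally, your alternative via Hardy--Littlewood--Sobolev plus fractional Gagliardo--Nirenberg is the only truly different argument on offer: it is correct as sketched, and it shows that \eqref{G-N} in fact persists in the wider range $0<\gamma<\min\{4s,N\}$, whereas the Hardy-based arguments (yours and the paper's) are confined to $\gamma\le 2s$; the restriction $\gamma<2s$ in the lemma is thus an artifact of the method (and of the mass-subcritical setting in which the lemma is used) rather than of the inequality itself.
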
%\edz{do we need to require $s\in (0,\frac12)$ when $N=1$? This is required in \eqref{Hardy} for applying Hardy ineq.}
\begin{proof} Note that 
\be\label{xga-v4}
\begin{array}{lll}
\int (\frac{1}{|x|^\gamma}*|v|^2)|v(x)|^2dx&=\int\int\frac{|v(x)|^2}{|x-y|^\gamma}|v(y)|^2dxdy\\
&\leq
 \|\int\frac{|v(x)|^2}{|x-y|^\gamma}dx \|_{\infty}\|v(y)\|_2^2.\end{array}\ee
%For the term $\int\frac{|v(x)|^2}{|x-y|^\gamma}dx$,   
Using  H\"{o}lder inequality with
  $1=\frac{\gamma}{2s}+\frac{2s-\gamma}{2s}$, we obtain 
  \begin{align}\label{e:hardy-ga-s2}
  \int\frac{|v(x)|^2}{|x-y|^\gamma}dx&=\int\frac{|v(x)|^{\frac{\gamma}{s}}}{|x-y|^\gamma}|v(x)|^{\frac{2s-\gamma}{s}}dx\notag\\
& \leq C \||x-y|^{-s}v(x)\|_2^{\frac{\gamma}{s}} \
\|v\|_2^{\frac{2s-\gamma}{s}}\notag\\
&\leq C \|v\|_{\dot{H}^s}^{\frac{\gamma}{s}}\
\|v\|_2^{\frac{2s-\gamma}{s}}.
\end{align}
In the last step of the above, we have employed  \eqref{Hs-hardy}. 
Thus, (\ref{G-N}) follows
from (\ref{xga-v4})  and (\ref{e:hardy-ga-s2}).
\end{proof}

\begin{lemma}\label{l:vn-hartree-ga}
Let $0<\gamma <2s$, $0<s<N/2$ and $V$ satisfy \eqref{mod-potential}.
 Suppose $\{v_n\}_{n=1}^{\infty}$  converges weakly to  $U$ in $\Sigma^s$. %and a subsequence  $\{v_{n_k}\}$ 
 Then there exists a subsequence (still denoted by $\{v_n\}$) such that
   \be\label{Compact1}
\|v_{n}\|_2^2\rightarrow \|U\|_2^2\quad {\rm as }\quad n\rightarrow \infty,
\ee and
   \be\label{Compact2}
\iint \frac{|v_{n}(x)|^2|v_{n}(y)|^2}{|x-y|^{\gamma}}dxdy\rightarrow \iint \frac{|U(x)|^2|U(y)|^2}{|x-y|^{\gamma}}dxdy\ \ \ {\rm as }\quad n\rightarrow \infty.
\ee
\end{lemma}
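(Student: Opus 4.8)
The plan is to derive both conclusions from a single structural fact: that the embedding $\Sigma^s\hookrightarrow L^2$ is compact, which is precisely where the hypothesis $V(x)\to\iy$ in \eqref{mod-potential} enters. Granting this, \eqref{Compact1} is immediate — weak convergence in $\Sigma^s$ forces $\{v_n\}$ to be bounded there, and a compact embedding upgrades $v_n\rightharpoonup U$ to strong convergence $v_n\to U$ in $L^2$, so in particular $\|v_n\|_2^2\to\|U\|_2^2$ along a subsequence. The strong $L^2$ convergence is then the engine that drives \eqref{Compact2}.

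First I would establish the compact embedding. Since $\{v_n\}$ converges weakly it is bounded in $\Sigma^s$, hence (using \eqref{mod-potential}, $V\ge c_1>0$, together with $(|\xi|^2+m^2)^s\ge|\xi|^{2s}$) bounded in $H^s$ and, separately, $\int V|v_n|^2\le C$. The latter yields tightness: given $\veps>0$, because $V(x)\to\iy$ there is $R$ with $V\ge\veps^{-1}$ on $|x|>R$, so $\int_{|x|>R}|v_n|^2\le\veps\int V|v_n|^2\le C\veps$ uniformly in $n$. On each ball $B_R$ the fractional Rellich--Kondrachov theorem gives $H^s(B_R)\hookrightarrow\hookrightarrow L^2(B_R)$; a diagonal extraction over $R=1,2,\dots$ produces a subsequence converging strongly in $L^2_{loc}$, with limit necessarily $U$ by uniqueness of weak limits. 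Combining this local strong convergence with the uniform tail bound gives $v_n\to U$ in $L^2(\R^N)$, which proves \eqref{Compact1}.

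For \eqref{Compact2} I would write $D(f,g):=\iint\frac{f(x)g(y)}{|x-y|^\ga}\,dx\,dy$ and split, using $|v_n|^2|v_n|^2-|U|^2|U|^2=(|v_n|^2-|U|^2)|v_n|^2+|U|^2(|v_n|^2-|U|^2)$, so that the difference in \eqref{Compact2} equals $D(|v_n|^2-|U|^2,|v_n|^2)+D(|U|^2,|v_n|^2-|U|^2)$. Each term is handled by Fubini together with the pointwise bound extracted from \eqref{e:hardy-ga-s2}, namely $\sup_x\int\frac{|w(y)|^2}{|x-y|^\ga}\,dy\le C\|w\|_{\dot H^s}^{\ga/s}\|w\|_2^{(2s-\ga)/s}$; applied with $w=v_n$ (uniformly bounded in $H^s$) and with $w=U$ (finite, since $\|U\|_{\dot H^s}\le\liminf\|v_n\|_{\dot H^s}$ by weak lower semicontinuity), this controls the convolution factor in $L^\infty$ by a constant independent of $n$. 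Hence both terms are dominated by $C\,\big\||v_n|^2-|U|^2\big\|_{L^1}$, and finally $\big\||v_n|^2-|U|^2\big\|_{L^1}\le\|v_n-U\|_2\,(\|v_n\|_2+\|U\|_2)\to0$, via the factorization $|v_n|^2-|U|^2=(|v_n|-|U|)(|v_n|+|U|)$, Cauchy--Schwarz, and the $L^2$ convergence just established.

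The main obstacle is the compactness step, i.e.\ the passage from boundedness in $\Sigma^s$ to strong $L^2$ convergence. Everything else is soft: the Hartree estimate \eqref{Compact2} reduces, through \eqref{e:hardy-ga-s2}, to convergence of $|v_n|^2$ in $L^1$, a routine consequence of $L^2$ convergence. The real content is the tightness produced by the growth of $V$ married to the local fractional Rellich theorem — this is the \emph{new compactness lemma} that makes the concentration–compactness scheme go through even though the underlying spatial domain is unbounded.
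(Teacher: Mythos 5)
Your proposal is correct and follows essentially the same route as the paper's own proof: compactness of the embedding $\Sigma^s\hookrightarrow L^2$ obtained from the tightness supplied by $V(x)\to\infty$ together with local (fractional) Rellich compactness, and then the same two-term decomposition of the Hartree integral controlled through the Hardy--H\"older bound \eqref{e:hardy-ga-s2}. Your final step is in fact slightly more careful than the paper's: where the paper writes the error bound as $\left|\|v_n\|_2^2-\|U\|_2^2\right|$ (which is not literally an upper bound for the integral of the absolute difference), you correctly use $\bigl\||v_n|^2-|U|^2\bigr\|_{L^1}\le \|v_n-U\|_2\bigl(\|v_n\|_2+\|U\|_2\bigr)$, which is what the argument actually requires and which tends to zero by the strong $L^2$ convergence both proofs establish.
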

\begin{remark} Equation (\ref{Compact1}) indeed implies that for $s>0$, $\Sigma^s$ is compactly embedded in $L^2$.
\end{remark}
\begin{proof}  Since %$\{v_n\}_{n=1}^{\infty}$ are % a bounded sequence 
 $\Sigma^s\subset L^2$, it is easy to see that  for $U\in \Sigma^s\cap L^2$, 
%and a subsequence (denoted by $v_n$ also) such that 
$v_n\rightharpoonup U$ weakly in $\Sigma^s$ and 
\be\label{Lemma21} 
 v_n\rightharpoonup U \ \  {\rm weakly \ \ in }\ \ \ \ L^2
 \ee
as $n \rightarrow \infty$. %\edz{the id. $4\Re(v_n,\phi)=(v_n+\phi, v_n+\phi)-(v_n-\phi,v_n-\phi)$ won't help here} 
For an elementary proof of \eqref{Lemma21}, see Proposition \ref{p:vn-wc-f-g} in the Appendix.

Since $\{v_n\}$ is weakly convergent in $\Sigma^s$, $ \Vert v_n\Vert_{\Sigma^s}$ is uniformly bounded. So there is 
a positive constant $K$ such that 
 \begin{equation}\label{e:Vvn-Sigma}
 \sup\limits_{n} \int V(x)|v_n(x)|^2dx\le \sup_n\Vert v_n\Vert^2_{\Sigma^s}<K.
 \end{equation}
 Furthermore, we will show that there is a subsequence of $\{v_n\}$ that  strongly converges in $L^2$ and satisfies \eqref{Compact2}. 

 (1) First, we consider the case where $U=0$. From \eqref{e:Vvn-Sigma}  with $V$ satisfying (\ref{mod-potential}), 
we have for arbitray $\epsilon>0$, there exists a constant $B=B_\eps>0$ (large enough) such that $\frac{1}{V(x)}\leq \epsilon$ when $|x|>B$. We see that 
\[
\int_{|x|> B} |v_n|^2dx=  \int_{|x|> B}\frac{1}{V(x)}\ V(x) |v_n|^2dx\leq K\epsilon.
\] 
For the fixed $B$,  compact embedding property for Sobolev space on bounded domain gives  that one can extract a subsequence
(still denoted by $\{v_n\}$) such that 
\be\label{Lemma22}  
v_n\rightarrow 0 \qquad {\rm strongly \; in }\;   L^2(\{|x|\leq B\}).
\ee
Hence,  there exists $L=L_\eps$ such that for all $n>L$, 
$\int_{|x|\leq B} |v_n|^2dx\leq \epsilon$. Thus, we have, if $n>L$,
\be\label{Lemma23}\int |v_n|^2dx=\int_{|x|\leq B} |v_n|^2dx+\int_{|x|> B} |v_n|^2dx\leq (K+1)\epsilon.
\ee
Now, taking $\eps=\eps_k\to 0$ as $k\to \iy$, a standard diagonal argument shows that 
there exists a subsequence of $\{v_n\}$, still denoted by $\{v_n\}$, such that %by letting $\epsilon\rightarrow 0$ in (\ref{Lemma23}),
   (\ref{Compact1}) is true  %and  $v_n\rightarrow 0=U $  strongly   in  $L^2$ 
   in the case $U=0$. 
Then, it follows from  (\ref{G-N}) in Lemma \ref{l:ga-hardy-hartree}  that 
\[
\int\int \frac{|v_n(x)|^2|v_n(y)|^2}{|x-y|^{\gamma}}dxdy \leq C \|v_n\|_{\dot{H}^s}^{\frac{\gamma}{s}}\
\|v_n\|_2^{\frac{4s-\gamma}{s}}\rightarrow 0\  \ \ \ {\rm as }\ \ \ \  n\rightarrow \infty.
\] 
This proves  (\ref{Compact2})  when $U=0$. 

(2) Secondly, we consider the case $U\neq 0$. Take $w_n=v_n-U$. We have
 $w_n\rightharpoonup 0$ weakly in $\Sigma^s$ and in $L^2$ as $n \rightarrow \infty$. Then, from the above discussion, 
 we see that there is a subsequence (still denoted by $\{w_n\}$) such that 
  $w_n\rightarrow 0 $  strongly   in  $L^2$. In other words, we have
\be\label{Lemma34}
\|v_n-U\|_2\rightarrow 0\ \  \ {\rm as}\ \ \ n\rightarrow\infty.
\ee
%Of course, by the Brezis-Lieb Lemma, so that (\ref{Compact1}) is true in this case
  To prove (\ref{Compact2}),  we will apply \eqref{e:hardy-ga-s2}. % the proof of Lemma 3.1. 
  Indeed, if $0<\gamma<2s$, we deduce that 
\begin{align*}
  &\left| \iint \frac{|v_n(x)|^2|v_n(y)|^2}{|x-y|^{\gamma}}dxdy- \iint \frac{|U(x)|^2|U(y)|^2}{|x-y|^{\gamma}}dxdy\right|\\
\leq &\left| \iint \frac{|v_n(x)|^2|v_n(y)|^2}{|x-y|^{\gamma}}dxdy-\iint \frac{|v_n(x)|^2|U(y)|^2}{|x-y|^{\gamma}}dxdy\right|\\
& +\left| \iint \frac{|v_n(x)|^2|U(y)|^2}{|x-y|^{\gamma}}dxdy-\iint \frac{|U(x)|^2|U(y)|^2}{|x-y|^{\gamma}}dxdy\right|\\
\leq & \|\int\frac{|v_n(x)|^2}{|x-y|^\gamma}dx \|_{\infty} \left|\|v_n\|_2^2-\|U\|_2^2\right|+\|\int\frac{|U(y)|^2}{|x-y|^\gamma}dy \|_{\infty}
\left|\|v_n\|_2^2-\|U\|_2^2\right|\\
\leq &C \|v_n\|_{\dot{H}^s}^{\frac{\gamma}{s}}\
\|v_n\|_2^{\frac{2s-\gamma}{s}}\left|\|v_n\|_2^2-\|U\|_2^2\right|+\|U \|_{\dot{H}^s}^{\frac{\gamma}{s}}\
\|U\|_2^{\frac{2s-\gamma}{s}}\left| \|v_n\|_2^2-\|U\|_2^2 \right|\\
\leq &C\left|\|v_n\|_2^2-\|U\|_2^2\right| \rightarrow 0\ \ {\rm as }\ \ n\rightarrow \infty.
\end{align*}
Therefore, (\ref{Compact2}) is true for $U\neq 0$, which  completes the proof.
\end{proof}

\section{Orbital stability of standing waves}\label{s:orbital-hartreeV} 
Given $0<\gamma<2s$, $s\in (0,N/2)$, $N\ge 1$, let  $m\geq 0$  and $M>0$. Consider  the following variational problem 
\be\label{VP}
d_M:=\inf\limits_{\{v\in \Sigma^s\big\vert \, \|v\|_2^2=M\}} E(v),
\ee 
where 
$E(v)= \frac 12 \norm{v}_{\Sigma^s}^2-\frac14\iint \frac{|v(x)|^2 |v(y)|^2}{|x-y|^{\gamma}} dxdy$, 
as defined in \eqref{Ev:DesV_hartree}. 
  The  following proposition constructs a minimizer, called ground state, to the problem (\ref{VP}). 
 \begin{proposition}\label{p:fnlsV} 
 Let  $M>0$.  Suppose $0<\gamma<2s$  and $V$ satisfies (\ref{mod-potential}), then the infimum in the variational problem
can be attained. That is, there exists $U\in \Sigma^s$ such that 
\be\label{VP2}
E(U)=d_M=\min\limits_{\{v\in \Sigma^s\big\vert \ \|v\|_2^2=M\}} E(v).
\ee
Moreover,  any minimizing sequence to \eqref{VP} must be relatively compact in $\Sigma^s$.
\end{proposition}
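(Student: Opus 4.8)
The plan is to run the direct method of the calculus of variations, using the compact embedding $\Sigma^s\hookrightarrow L^2$ recorded in the remark after Lemma \ref{l:vn-hartree-ga} as the device that restores compactness. First I would check that $d_M$ is finite and that every minimizing sequence is bounded in $\Sigma^s$. On the constraint $\|v\|_2^2=M$, Lemma \ref{l:ga-hardy-hartree} bounds the Hartree term by $C\|v\|_{\dot H^s}^{\gamma/s}M^{(4s-\gamma)/(2s)}$; since $(|\xi|^2+m^2)^s\ge|\xi|^{2s}$ and $V\ge c_1>0$ give $\|v\|_{\dot H^s}^2\le\|v\|_{\Sigma^s}^2$, and since $\gamma<2s$ forces the exponent $\gamma/s<2$, Young's inequality absorbs this term into the kinetic part:
\[
E(v)\ \ge\ \tfrac12\|v\|_{\Sigma^s}^2-\tfrac{C}{4}\,\|v\|_{\Sigma^s}^{\gamma/s}M^{(4s-\gamma)/(2s)}\ \ge\ \tfrac14\|v\|_{\Sigma^s}^2-C'(M).
\]
This yields both $d_M>-\infty$ and coercivity, so any minimizing sequence $\{v_n\}$ with $\|v_n\|_2^2=M$ and $E(v_n)\to d_M$ stays bounded in $\Sigma^s$. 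This is exactly the step where the \emph{mass-subcritical} assumption $\gamma<2s$ is essential.

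Since $\Sigma^s$ is a Hilbert space, I would then extract a subsequence $v_n\rightharpoonup U$ weakly in $\Sigma^s$. Here Lemma \ref{l:vn-hartree-ga} does the decisive work: along a further subsequence, \eqref{Compact1} gives $\|v_n\|_2^2\to\|U\|_2^2=M$, so $U$ is admissible (and in particular $U\neq 0$), while \eqref{Compact2} gives convergence of the Hartree term. Because $v\mapsto\|v\|_{\Sigma^s}^2$ is weakly lower semicontinuous (being the square of a Hilbert-space norm) and the nonlinear part converges by \eqref{Compact2}, we obtain $E(U)\le\liminf_n E(v_n)=d_M$. Since $U$ lies in the constraint set, $E(U)\ge d_M$ by definition of the infimum, whence $E(U)=d_M$ and \eqref{VP2} holds.

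For the relative compactness I would upgrade weak to strong convergence. From $E(U)=d_M=\lim_n E(v_n)$ together with the convergence \eqref{Compact2} of the Hartree term, the identity $\tfrac12\|v_n\|_{\Sigma^s}^2=E(v_n)+\tfrac14\iint|x-y|^{-\gamma}|v_n(x)|^2|v_n(y)|^2\,dx\,dy$ forces $\|v_n\|_{\Sigma^s}^2\to\|U\|_{\Sigma^s}^2$. In a Hilbert space, $v_n\rightharpoonup U$ together with norm convergence $\|v_n\|_{\Sigma^s}\to\|U\|_{\Sigma^s}$ implies $\|v_n-U\|_{\Sigma^s}\to 0$, so $\{v_n\}$ converges strongly in $\Sigma^s$. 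As this applies to an arbitrary minimizing sequence, relative compactness follows.

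The chief obstacle in problems of this type is the familiar loss of compactness on $\R^N$ stemming from translation invariance, i.e.\ the vanishing and dichotomy scenarios of concentration compactness. That difficulty is here pre-empted entirely: the unbounded potential $V\to\infty$ renders $\Sigma^s\hookrightarrow L^2$ compact, so the weak $L^2$-limit automatically carries the \emph{full} mass $M$, which simultaneously rules out escape of mass to infinity and splitting of the sequence. Consequently the only genuinely delicate points are the coercivity estimate in the subcritical regime and the reliance on the Hilbert-space structure for the passage from weak-plus-norm to strong convergence; neither is a serious difficulty once Lemmas \ref{l:ga-hardy-hartree} and \ref{l:vn-hartree-ga} are in hand.
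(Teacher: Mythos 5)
Your proposal is correct and follows essentially the same route as the paper's proof: coercivity of $E$ on the constraint set via Lemma \ref{l:ga-hardy-hartree} and the subcritical exponent $\gamma/s<2$, weak compactness in the Hilbert space $\Sigma^s$, Lemma \ref{l:vn-hartree-ga} to recover the full mass and the convergence of the Hartree term, weak lower semicontinuity to identify $U$ as a minimizer, and finally norm convergence plus weak convergence to upgrade to strong convergence in $\Sigma^s$. The only cosmetic difference is that you absorb the nonlinear term by Young's inequality to get the coercive bound $\tfrac14\|v\|_{\Sigma^s}^2-C'(M)$ in one step, whereas the paper first bounds $E$ from below and then separately deduces boundedness of the minimizing sequence; the substance is identical.
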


 \begin{proof} Firstly, we prove the variational problem (\ref{VP}) is well-defined. That is, $E(v)$ has  a lower bound
 in $\{v\in \Sigma^s\big\vert \ \|v\|_2^2=M\}$. 
 From Lemma \ref{l:ga-hardy-hartree}, we deduce that 
 \be\label{VP3}
 \begin{aligned}
 E(v) 
   &\geq\frac 12 \int (|\xi|^2+m^2)^s|\widehat{v}|^2d\xi+\frac 12 \int V |v|^2dx-  C\|v\|_{\dot{H}^s}^{\frac{\gamma}{s}}\
    \|v\|_2^{\frac{4s-\gamma}{s}}\\
&\geq\frac 12 \int |\xi|^{2s} |\widehat{v}|^2d\xi-C'_M\left(\int |\xi|^{2s} |\widehat{v}|^2d\xi\right)^{\frac{\gamma}{2s}}\\
     &\ge -C_{M}.
 \end{aligned}
 \ee %%C_{M,\eps} ....only needed when \ga=2s
In the last step, we have used the  elementary inequality $\frac12 X-C'_MX^{\ga/2s}\ge -C_{M}$ for all $X>0$ and some constants $C'_M, C_M>0$. 
Thus,  $E(v)$ is bounded from below and the infimum $d_M$ exists. 
 
Secondly, take any minimizing sequence $\{v_n\}$ of Problem (\ref{VP}) satisfying
\begin{equation}\label{VP4}
E(v_n)\rightarrow d_M \quad \textup{and} \quad
\|v_n\|_2^2\rightarrow M,  \quad \;\textup{as}\; n\rightarrow\infty.
\end{equation}
We see that there exists an $L$ such that for all $n\ge L$,
\be\label{VP6} E(v_n)<d_M+1.
\ee 
This, together with \eqref{VP3} implies  for all $n\ge L$, 
 \be\label{VP7}
 \frac 12\norm{v_n}_{\Sigma^s}^2 \le C_M \norm{v_n}_{\dot{H}^s}^{\ga/s} +d_M+1.
 \ee
Hence,  $\{v_n\}$ must be bounded in $\Sigma^s$ by virtue of the condition $\frac{\ga}{s}<2$. 
 
 Thirdly, from the boundedness of  $\{v_n\}$, we know there exists a subsequence 
 (still denoted by $\{v_n\}$) and  $U\in \Sigma^s$ 
such that
 \be\label{VP8} 
 v_n(x) \rightharpoonup U(x) \ \  {\rm weakly \ \ in }\ \ \ \ \Sigma^s.
 \ee
 By  the lower semi-continuity of norm $\Sigma^s$, we have
\begin{equation}\label{e:U-vn_sigma}
\|U \|_{\Sigma^s}^2\leq \liminf\limits_{n\rightarrow \infty} \|v_n \|_{\Sigma^s}^2\,.
\end{equation}  
Combining  \eqref{e:U-vn_sigma},   Lemma \ref{l:vn-hartree-ga}  and \eqref{VP4} we obtain
 that 
 $\|v_n\|_2^2\rightarrow \|U\|_2^2=M$ and  %hence $v_n\to U$ strongly in $L^2$. 
% from (\ref{Compact2})
 \[
 E(U)\leq \liminf_{n\to\iy}E(v_n)= d_M.  %\quad n\rightarrow \infty
 \]
 But from the definition of $d_M$, we must have $E(U)= d_M$. %as $ \|U\|_2^2=M$ 
% \[  \|U\|_2^2=M\quad {\rm and }\quad E(U)=d_M.  \]
 That is, $U$ is a minimizer of  (\ref{VP}). 
To prove the statement on relative compactness, observe that the last argument shows 
\begin{align*}
E(U)= \lim_{n\to \iy}E(v_n).
\end{align*}
This, along with Lemma \ref{l:vn-hartree-ga} implies there exists a subsequence $\{v_{n_k}\}$ such that 
$\lim_{k\to\iy} \norm{v_{n_k}}_{\Sigma^s}= \norm{U}_{\Sigma^s}$.
Therefore, in view of \eqref{VP8},
 the strong convergence  $v_{n_k}\to U$ in $\Sigma^s$ follows. 
 \end{proof}
\begin{remark} The existence of ground states for FNLS with unbounded $V$ was obtained in \cite{Ch12a} 
via Nehari's manifold approach. The existence and symmetry of ground state solutions were studied in \cite{Wu14a} for Hartree equation with zero potential.
\end{remark}
  
Define the set
    \be\label{5.18} 
    S_M:=\{v\in \Sigma^s\big\vert \ v\  \text{ is the  minimizer of Problem  (\ref{VP}) }\}.
\ee %\edz{this ref.no. need to change to (4.1)}
From the Euler-Lagrange Theorem (see \cite{Cazenave2003}),   for any $v\in
S_M\subset \Sigma^s$, there exists $\omega \in \mathbb{R}$ such that
\be\label{5.0000} (-\triangle+m^2)^s v +V(x)v+\omega v- (\frac{1}{|x|^\gamma}\ast|v|^{2}) v=0.
\ee 
Moreover,   $u(t,x)=e^{i\omega t} v(x)$ is a standing
wave solution for  (\ref{1.1}). Thus, each $v$ in $S_M$ is called an orbit. %e^{i\omega t} v(x)$ is the orbit of $v(x)$.
  It is easy to check that for any $t>0$, if $v\in S_M$, then $e^{i\omega t} v(x)\in S_M$.  %that is, $e^{i\omega t} v(x)\in S_M$
Applying Proposition \ref{p:fnlsV}, we now
prove the following orbital stability  for (\ref{1.1}). More precisely, assuming Hypothesis \ref{h:lwp-blupaltern-conserv},  we
show that if the initial data   is close to an orbit $v\in S_M$,
then the solution   of evolution system
(\ref{1.1})-(\ref{1.2}) remains close to  $ S_M$, the set of ground states, for all time.

 \begin{theorem}[orbital stability of standing waves]\label{th:orb-stab_V}  %  $m\geq 0$, $N\geq 1$, 
Let $M>0$ and $V$ satisfy (\ref{mod-potential}). Let  $0<s<N/2$ and $0<\gamma<2s$.  Then,  for arbitrary
$\varepsilon>0$, there exists $\delta>0$ such that  if the initial data $u_0$ in $\Sigma^s$ satisfies 
\be\label{5.20}
\inf\limits_{v\in S_M}\|u_0-v\|_{\Sigma^s}<\delta,
\ee 
it holds for  the solution $u$ of the Cauchy
problem (\ref{1.1})-(\ref{1.2}),
\be\label{5.20}
\inf\limits_{v\in S_M}\|u(t,x)-v(x)\|_{\Sigma^s}<\varepsilon 
\ee 
for all $t$, where $S_M$ is defined in (\ref{5.18}).
\end{theorem}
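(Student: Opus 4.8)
The plan is to prove orbital stability by the standard Cazenave--Lions contradiction argument, leveraging Proposition \ref{p:fnlsV} (relative compactness of minimizing sequences) together with the conservation laws in Hypothesis \ref{h:lwp-blupaltern-conserv}. First I would argue by contradiction: suppose the conclusion fails. Then there exist $\varepsilon_0>0$, a sequence of initial data $u_{0,n}\in\Sigma^s$, and times $t_n$ such that
\[
\inf_{v\in S_M}\|u_{0,n}-v\|_{\Sigma^s}<\frac1n,
\qquad
\inf_{v\in S_M}\|u_n(t_n,\cdot)-v\|_{\Sigma^s}\ge \varepsilon_0,
\]
where $u_n$ denotes the solution with data $u_{0,n}$. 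The goal is to show that the sequence $\{u_n(t_n,\cdot)\}$ is in fact a minimizing sequence for \eqref{VP} (up to a normalization of the mass), which by Proposition \ref{p:fnlsV} forces a subsequence to converge strongly in $\Sigma^s$ to some minimizer $U\in S_M$, contradicting the second displayed inequality.

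To carry this out, I would first pick for each $n$ a near-optimal orbit $\phi_n\in S_M$ with $\|u_{0,n}-\phi_n\|_{\Sigma^s}<2/n$. Continuity of the conserved quantities $M$ and $E$ with respect to the $\Sigma^s$-norm (here one uses Lemma \ref{l:vn-hartree-ga}, or more directly the Hardy-type estimate \eqref{G-N} to control the Hartree term) gives
\[
M(u_{0,n})\to M(\phi_n)=M,
\qquad
E(u_{0,n})\to E(\phi_n)=d_M
\quad\text{as }n\to\infty.
\]
By the conservation laws, $M(u_n(t_n))=M(u_{0,n})\to M$ and $E(u_n(t_n))=E(u_{0,n})\to d_M$. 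Thus $\{u_n(t_n,\cdot)\}$ meets the defining conditions \eqref{VP4} of a minimizing sequence, after rescaling each element by the factor $\sqrt{M/M(u_n(t_n))}\to 1$ to land exactly on the constraint sphere $\|v\|_2^2=M$; this rescaling perturbs the $\Sigma^s$-norm negligibly and does not affect the limiting energy.

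Applying Proposition \ref{p:fnlsV} to this minimizing sequence, I extract a subsequence converging strongly in $\Sigma^s$ to a minimizer $U\in S_M$. Then
\[
\inf_{v\in S_M}\|u_n(t_n,\cdot)-v\|_{\Sigma^s}\le \|u_n(t_n,\cdot)-U\|_{\Sigma^s}\to 0,
\]
directly contradicting $\inf_{v\in S_M}\|u_n(t_n,\cdot)-v\|_{\Sigma^s}\ge\varepsilon_0$, which establishes the theorem.

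I expect the main obstacle to be the bookkeeping around the mass constraint: the solutions $u_n(t_n,\cdot)$ satisfy $\|u_n(t_n)\|_2^2=M(u_{0,n})$, which is only \emph{close} to $M$ rather than equal to it, so they do not lie on the constraint sphere and are not literally admissible in \eqref{VP}. The fix is the normalization described above, but one must verify carefully that rescaling by $\lambda_n:=\sqrt{M/M(u_n(t_n))}\to1$ sends a sequence satisfying $E\to d_M$ into another such sequence and that $\|\lambda_n u_n(t_n)-u_n(t_n)\|_{\Sigma^s}\to0$, so that strong convergence of the rescaled sequence transfers back to the original. A secondary technical point is ensuring the continuity of $E$ along the approximating orbits $\phi_n$; since all $\phi_n$ share the same mass $M$ and lie in $S_M$ with $E(\phi_n)=d_M$ exactly, this is immediate and the only genuine estimate needed is the Lipschitz-type bound on the Hartree functional supplied by \eqref{G-N}.
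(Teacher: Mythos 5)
Your proposal is correct and follows essentially the same route as the paper: the contradiction setup, the use of the conservation laws plus continuity of $M$ and $E$ on $\Sigma^s$ (via the Hardy-type bound) to show $\{u_n(t_n,\cdot)\}$ is a minimizing sequence, and then Proposition~\ref{p:fnlsV} to extract a strongly convergent subsequence contradicting \eqref{5.23}. The only difference is your mass-rescaling step, which is actually unnecessary: the paper's Proposition~\ref{p:fnlsV} defines minimizing sequences by the relaxed condition \eqref{VP4}, i.e.\ $\|v_n\|_2^2\to M$ rather than exact membership in the constraint sphere, so $\{u_n(t_n,\cdot)\}$ is directly admissible without normalization.
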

\begin{proof}   
We prove the theorem by contradiction  following  the standard method for NLS (see \cite{Cazenave2003,zhang2000a,Zhang00}).  
Let $u_0\in \Sigma^s$ and $u$ be the  unique solution for \eqref{1.1}.  %view of Hypothesis \ref{h:lwp-blupaltern-conserv}.
From the proof of Proposition \ref{p:fnlsV} and the conservation laws in Hypothesis \ref{h:lwp-blupaltern-conserv}, we obtain that for all $t$,   
\[
\frac12\norm{u(t,\cdot)}_{\Sigma^s}^2 \leq E(u_0)+C(\|u_0\|_2) \norm{u(t,\cdot)}_{\Sigma^s}^{\ga/s}. %+\|u_0\|_2^2
\]
This suggests   %$0<s<1$  and $0<\gamma<2s$,\edz{can we let $0<s<N/2$?} 
 that the  $\Sigma^s$-norm of $u$ is uniformly bounded for all $t$. %for the whole maximal interval  $[0,T^*)$

 Assume that  the conclusion in the theorem is false, then there 
exist  $\varepsilon_0>0$ and a  sequence of initial data
$\{u_{0,n}\}_{n=1}^{\infty}$ such that 
\be\label{5.22}
\inf\limits_{v\in S_M}\|u_{0,n}-v\|_{\Sigma^s}<\frac1n,
\ee
and there exists a sequence of time $\{t_n\}_{n=1}^{\infty}$   such that for all $n$,
 \be\label{5.23}
 \inf\limits_{v\in S_M}\|u_n(t_n,x)-v\|_{\Sigma^s}\geq\varepsilon_0.
 \ee
But from (\ref{5.22}), the conservation
laws and Lemma \ref{l:vn-hartree-ga}, it follows that there exist $w\in \Sigma^s$ and 
a subsequence of $\{u_n\}$ (still denoted by $\{u_n\}$)
such that as $n\rightarrow\infty$
\[\|u_n(t_n,x)\|_2^2 =\|u_{0,n}\|_2^2 \rightarrow \norm{w}_2^2=M\]
and
\[ E(u_n(t_n,x))=E(u_{0,n})\rightarrow E(w)=d_M.\]
Hence,  $\{u_n(t_n,\cdot)\}_{n=1}^{\infty}$ is a minimizing
sequence of  (\ref{VP}). According to Proposition \ref{p:fnlsV}, 
$w\in S_M$ is  a minimizer such that, when passing to a subsequence  if necessary,
 \be\label{5.24} 
 \|u_n(t_n,x)-w(x)\|_{\Sigma^s}\rightarrow 0\  \ {\rm as}\ \ n\rightarrow\infty.
 \ee
This   contradicts  (\ref{5.23}), which proves the theorem. 
\end{proof}
\begin{remark}\label{re:genV} The results in Theorem \ref{th:orb-stab_V} and Proposition \ref{p:fnlsV} continue to hold if $V$
satisfies the slightly more general condition \eqref{Potential}. 
To see this, it suffices to observe the following properties. %hold true: %for Problem (\ref{VP}), the mass $M(u)$ and the energy 
Let $E_V(u):=E(u)=\frac12\norm{u}^2_{\Sigma^s}-\frac14\iint \frac{|u(x)|^2 |u(y)|^2}{|x-y|^\ga} dxdy$ be given as in \eqref{Ev:DesV_hartree}
and  $\Sigma^s_V:=\Sigma^s$ as in \eqref{e:Sigma^s_V}.  Let $C\ge c_0+c_1$ be a fixed constant. Then we have: %are phase-invariant, and that for \eqref{5.20}
\begin{enumerate}
\item $M(e^{i\theta}u)=M(u)$
\item $E_V(e^{i\theta}u)=E_V(u)$   %not necessarily translation or scaling invariant due to the presence of V
\item $e^{i\theta}S_M=S_M$
\item  $E_{V+C}(u)=E_V(u)+\frac{C}{2}M(u)$
\item The set of minimizers $S_M$ is independent of $V+C$ %for any real constant $C$
%where the space $\Sigma^s$   is given by \eqref{e:Sigma^s_V}. %E_\td{V}(Q)=E_V(Q)-CM if \td{V}=V-C
\item Denote by $u_V$ the solution of \eqref{1.1}-\eqref{1.2}, then  we have
$u_{V+C}(t,x)=e^{-itC}u_V(t,x)$.
\end{enumerate}
\end{remark} 

\begin{remark}\label{re:standardHartreeV} The case $s=1$, $\ga=2$ (mass-critical) %what is the update result for $s=1$
was studied in \cite{HuangZhangLi13}, %the usual Hartree equation with a harmonic potential 
where the threshold for  the stability of standing waves for \eqref{1.1} are obtained. 
Earlier result on % and [Oh91]
the l.w.p. and mass and energy conservation laws for \eqref{1.1} can be found in \cite{Cazenave2003}  for $s=1$, %standard hartree equation with a harmonic potential $(s=1)$
and $\ga\le 2$. 
Let  $Q_0$ be the unique radial positive ground state solution of \eqref{VP}, where $V=0$. 
The paper \cite[Theorem 4.1]{HuangZhangLi13} followed  a quite standard variational approach and proved that 
if  $s=1$, $\ga=2$ and  $M<\norm{Q_0}_2^2$,  
then there exist ground state solutions  %called the minimizers, 
of the minimization problem \eqref{VP}, where $V=|x|^2$.  Moreover, these ground state solutions are orbitally stable.  

In the absence of $V$, when $\ga=2s$ ($0<s<1$ and $N\ge 2$),
Zhang and Zhu \cite{zhangzhu2015}  %choose $u_{0,n}=c_n \rho_n^{N/2}Q(\rho_n x)$ with $c_n\searrow 1$ and $\rho_n\to 1$,
%by a virial identity, %from \cite{ChoHwKwLee12a}, sharp G-N inequality, Pohozaev and profile decomposition, 
 proved the orbital stability via profile decomposition method. 
They also showed strong instability of \eqref{1.1} by constructing blowup solutions with initial data arbitrarily close to $Q_0$ in $H^s$.
 \end{remark} 
 
\section{Appendix: Uniqueness of weak convergence} 
For the proof of Lemma \ref{l:vn-hartree-ga}   we need  the following.
\begin{proposition}\label{p:vn-wc-f-g} Let $s>0$ and $\Sigma^s\subset L^2$ be defined as in \eqref{e:Sigma^s_V} with $V$ satisfying \eqref{mod-potential}.
Let $\{v_n\}$ be a sequence in $\Sigma^s$. 
If $\{v_n\}$  converges weakly to $f$ in $\Sigma^s$ 
and $\{v_n\}$ converges weakly  to $g$ in $L^2$. Then $f$ is  identical to $g$ in $\Sigma^s\cap L^2$. 
\end{proposition}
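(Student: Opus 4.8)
The plan is to reduce everything to the continuous embedding $\Sigma^s\hookrightarrow L^2$ together with uniqueness of weak limits in a Hilbert space. First I would make the embedding quantitative. Since $V\ge c_1>0$ by \eqref{mod-potential} and $(|\xi|^2+m^2)^s\ge 0$, the definition \eqref{v-Hs-norm} gives
\[
\norm{v}_{\Sigma^s}^2=\int\overline{v}(-\De+m^2)^sv+\int V|v|^2\ge c_1\int|v|^2=c_1\norm{v}_2^2 ,
\]
so that $\norm{v}_2\le c_1^{-1/2}\norm{v}_{\Sigma^s}$ for every $v\in\Sigma^s$; in particular the inclusion $\Sigma^s\to L^2$ is a bounded linear map.

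Next I would transfer the $\Sigma^s$-weak convergence into $L^2$. For each fixed $\psi\in L^2$ consider the functional $\Lambda_\psi(v):=\inner{v}{\psi}_{L^2}$ defined on $\Sigma^s$. By Cauchy--Schwarz and the embedding bound just established, $|\Lambda_\psi(v)|\le\norm{\psi}_2\,\norm{v}_2\le c_1^{-1/2}\norm{\psi}_2\,\norm{v}_{\Sigma^s}$, so $\Lambda_\psi$ is a bounded linear functional on $\Sigma^s$. Since $v_n\rightharpoonup f$ weakly in $\Sigma^s$, testing against $\Lambda_\psi$ yields $\inner{v_n}{\psi}_{L^2}\to\inner{f}{\psi}_{L^2}$ for every $\psi\in L^2$; that is, $v_n$ also converges weakly to $f$ in $L^2$.

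Finally, by hypothesis $v_n\rightharpoonup g$ weakly in $L^2$ as well, and weak limits in the Hilbert space $L^2$ are unique, so the two statements force $\inner{f}{\psi}_{L^2}=\inner{g}{\psi}_{L^2}$ for all $\psi\in L^2$, whence $f=g$ in $L^2$. As $f\in\Sigma^s$, this identifies $g$ with the same element of $\Sigma^s\cap L^2$, which is exactly the assertion.

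I do not expect a genuine obstacle: this is a soft-analysis fact, essentially the weak-to-weak continuity of a bounded operator combined with uniqueness of Hilbert-space weak limits. The only step requiring any care is the boundedness of $\Lambda_\psi$ on $\Sigma^s$, and that is precisely where the strict positivity $V\ge c_1>0$ (equivalently, continuity of $\Sigma^s\hookrightarrow L^2$) is invoked; once it is in hand, nothing further is needed.
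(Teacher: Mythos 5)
Your proof is correct, but it follows a genuinely different and more elementary route than the paper's. The paper argues through the operator $H=H_{s,V}=(-\De+m^2)^s+V$: using the appendix's setup (essential self-adjointness, $\sigma(H)\subset[c_1,\iy)$, hence $H\inv:L^2\to D(H)$ bounded), it tests the $\Sigma^s$-weak convergence against $\phi\in D(H)$ via the identity $\la v_n,\phi\ra_{\Sigma^s}=\la v_n,H\phi\ra_{L^2}$, combines this with the $L^2$-weak convergence to get $\la f,H\phi\ra_{L^2}=\la g,H\phi\ra_{L^2}$, and then substitutes $\phi=H\inv\vphi$ for arbitrary $\vphi\in L^2$ to conclude $f=g$. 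You instead use only the continuous embedding $\Sigma^s\hr L^2$ (coming from $V\ge c_1>0$) to note that each functional $v\mapsto\inner{v}{\psi}_{L^2}$ is bounded on $\Sigma^s$, so $\Sigma^s$-weak convergence transfers to $L^2$-weak convergence of $v_n$ to $f$, and uniqueness of weak limits in $L^2$ finishes the argument. The two proofs are structurally linked: by Riesz representation, the functional $\Lambda_\psi$ is represented in the $\Sigma^s$ inner product exactly by $H\inv\psi$, so the paper's argument is the explicit, operator-theoretic version of yours. What your formulation buys is brevity and generality --- it bypasses the self-adjointness and invertibility discussion entirely and works verbatim for any two Hilbert spaces with a continuous embedding; what the paper's route buys is an explicit description of the dual pairing through $H$, consistent with the weak-form framework its appendix develops for other purposes.
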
  

The domain of $H=H_{s,V}$ is given by 
\[
D(H)=\{  \phi\in L^2:  (-\De+m^2 )^s \phi\in L^2, V \phi\in L^2\}.\] 
It is easy to verify that $D(H)$ is a {complete} Hilbert subspace of $L^2$ with respect to the norm
\begin{align*}
\norm{\phi}_{D(H)}:=\left( \norm{(-\De+m^2 )^s \phi}^2_2+\norm{V\phi}_2^2  \right)^{1/2} .
\end{align*}
The form domain of $H$ is defined as: $v\in Q(H)\iff v\in L^2(\R^N)$ and 
\begin{align}
(-\De+m^2 )^{s/2} v\in L^2\quad and\quad V^{1/2}v\in L^2,
\end{align}
which is complete with respect to the norm $\sqrt{q(v,v)}=\norm{v}_{\Sigma^s}$ given in \eqref{v-Hs-norm}.  We have $D(H)\subset Q(H)$.
The form definition for $H$ is equivalent to the following weak form definition. 

\begin{definition} A  function $f\in \Sigma^s=H^s\cap D(\sqrt{V})$ is in $Q(H)$ 
if and only if for all $\phi\in H^{s}(\R^N)$, there exists an $h\in L^2\cap H^{-s}$ s.t. 
\begin{align*}
&\int_{\R^N} f(x)  H \phi  \, dx= \int h(x) \phi(x) dx \quad \forall \phi\in H^{s} \\
=&\int f(x) (-\De+m^2 )^s \phi\,dx+\int f (x) V \phi \,dx.
\end{align*}
\end{definition}
We  see that $Q(H)=D(H)=H^{2s}\cap D(V)$. 
 Since on $D(H)$ it holds 
 \[\la (-\De+m^2)^s\phi, \phi\ra+ \la V\phi ,\phi\ra\ge (m^{2s}+c_1)\la \phi,\phi\ra,  \]
we obtain that the spectrum $\sigma(H)\subset [c_1, \iy)$, which implies  $0$ is in the resolvent set for $H$.
Hence $H\inv=(H-0)\inv$: $L^2\to D(H)\subset \Sigma^s$ exists and is a continuous mapping. 

\begin{proof}[Proof of Proposition \ref{p:vn-wc-f-g}] For all $\phi\in D(H)\subset\Sigma^s\subset L^2$, $s> 0$, the inner product on $\Sigma^s$
 \begin{align*}
&\la f, \phi\ra_{\Sigma^s}=\lim_n \la v_n,\phi\ra_{\Sigma^s} \\
=&\lim_n\la (-\De+m^2 )^{s/2} v_n, (-\De+m^2 )^{s/2}\phi \ra_{L^2}+ \la V^{1/2}v_n,V^{1/2}\phi\ra_{L^2}\\
=&\lim_n q( v_n,\phi)=\lim_n\la v_n, H\phi\ra_{L^2}=\la g, H\phi\ra_2,
\end{align*}
where $H=H_{s,V}$ is a positive self-adjoint operator in $L^2$, cf. \eqref{v-Hs-norm} and the quadratic form defined there. 
On the other hand, observe that 
\begin{align*}
&\la f, \phi\ra_{\Sigma^s}=\la (-\De+m^2 )^{s/2}f, (-\De+m^2 )^{s/2}\phi \ra_{L^2}+ \la V^{1/2}v_n,V^{1/2}\phi\ra_{L^2}\\
=&\la  f, H\phi\ra_2.
\end{align*}
Now as functions (or distributions) it follows that $f=g$ in $L^2$ by taking $\phi= H\inv \vphi$ for all $\vphi\in L^2 $.
This proves Proposition \ref{p:vn-wc-f-g}.
\end{proof}

\begin{remark}  The proof here relies on 
 the fact that the spectrum of $H_{s,V}$ is bounded below by zero, whence one sees that  $H_V=(-\Delta+m^2)^s+V$ has an inverse that continuously maps $L^2$ onto $D(H)$. 
\end{remark}

\noindent{\bf Acknowledgment}  This paper was partially done when  Shihui Zhu visited  School of Mathematics at Georgia Institute of Technology, who  would like to thank the hospitality of the School of Mathematics. 
This work  was  supported by the National Natural Science Foundation of China grant No.~11501395, No.~11371267 and  Excellent Youth Foundation of Sichuan  Scientific Committee grant No.~2014JQ0039  in China. 

\vspace{0.4cm}
%    Bibliographies can be prepared with BibTeX using amsplain,
%    amsalpha, or (for "historical" overviews) natbib style.
\bibliographystyle{amsplain}

 \end{document}